\newtheorem{thm}{Theorem}[section]
\newtheorem{defn}{Definition}
\newtheorem{conj}{Conjecture}
\newtheorem{prop}[thm]{Proposition}
\newtheorem{ques}[conj]{Question}
\newcommand{\Z}{\mathbb{Z}}
\renewcommand{\l}{\left}
\renewcommand{\r}{\right}
\title{On tight tree-complete hypergraph Ramsey numbers}
\author{Jiaxi Nie\footnote{School of Mathematics, Georgia Institute of Technology,
Atlanta, GA 30332 USA. {\tt jnie47@gatech.edu}.}}
\date{\today}
\begin{document}

\maketitle

\begin{abstract}
Chvátal showed that for any tree $T$ with $k$ edges the Ramsey number $R(T,n)=k(n-1)+1$ (``Tree‐complete graph Ramsey numbers.'' Journal of Graph Theory 1.1 (1977): 93-93). For $r=3$ or $4$, we show that, if $T$ is an $r$-uniform non-trivial tight tree, then the hypergraph Ramsey number $R(T,n)=\Theta(n^{r-1})$. The 3-uniform result comes from observing a construction of Cooper and Mubayi. The main contribution of this paper is the 4-uniform construction, which is inspired by the Cooper-Mubayi 3-uniform construction.
\end{abstract}

\section{Introduction}

Let $F$ be an $r$ uniform hypergraph ($r$-graph). An independent set of $F$ is a set of vertices containing no edge of $F$. The {\em independence number} of $F$, denoted $\alpha(F)$, is the maximum size of an independent set of $F$. The {\em Ramsey number} $R(F,n)$ is the minimum integer $N$ such that every $F$-free $N$-vertex $r$-graph has an independent set of size $n$. Determining or estimating $R(F,n)$ is a central topic in extremal combinatorics. 

Chavátal~\cite{chvatal1977tree} showed that for any tree $T$ with $k$ edges the Ramsey number $R(T,n)=k(n-1)+1$. In this short paper, we extend this result to hypergraph \emph{tight tree}.

\begin{defn}
An $r$-graph $T$ is a {tight $r$-tree} if its edges can be ordered as $e_1,\dots,~e_t$ so that 
\begin{equation}\label{eq:tight tree}
\forall i\ge 2~\exists v\in e_i~and~1\le s\le i-1~such~that~v\not\in\cup_{j=1}^{i-1}e_j~and~e_i-v\subset e_s.  
\end{equation}
\end{defn}

A hypergraph $H$ is \emph{non-trivial} if no vertex is contained in all of its edges, i.e. $\cap_{e\in H}e=\emptyset$; otherwise, it is \emph{trivial}. The main result of this paper is the following:

\begin{thm}\label{thm:main}
For $r=3$ or $4$, if $T$ is a non-trivial tight $r$-tree, then there exist constants $c_1,c_2>0$ such that, for every positive integer $n$,
$$
c_1n^{r-1}\le R(T,n)\le c_2 n^{r-1}
$$
\end{thm}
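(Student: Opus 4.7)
The plan is to handle the upper and lower bounds by essentially independent arguments, with the lower bound being the substantial part.

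For the upper bound $R(T,n)\le c_2 n^{r-1}$, I would combine a greedy embedding with a Spencer-type independence bound. The first step is to show that any $T$-free $r$-graph $H$ on $N$ vertices has $|E(H)|=O_T(N^{r-1})$: by a standard iterative cleaning procedure, if $|E(H)|$ is too large then some subgraph $H'\subseteq H$ has minimum $(r-1)$-codegree at least $|V(T)|$, and in such an $H'$ one can embed $T$ greedily by processing $e_1,\dots,e_t$ in the tight-tree order, using at each step $i\ge 2$ the fact that $e_i\setminus\{v\}$ is a fixed $(r-1)$-subset of an already-embedded edge $e_s$ together with the codegree lower bound to pick an unused image for $v$. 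Once $|E(H)|=O(N^{r-1})$ is in hand, the Spencer / Ajtai--Koml\'os--Pintz--Spencer--Szemer\'edi bound $\alpha(H)=\Omega((N^r/|E(H)|)^{1/(r-1)})$ immediately yields $\alpha(H)=\Omega(N^{1/(r-1)})$, which translates into $R(T,n)=O(n^{r-1})$.

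For the lower bound $R(T,n)\ge c_1 n^{r-1}$, the task is to exhibit, for each $N$, a $T$-free $r$-graph on $N$ vertices with independence number $O(N^{1/(r-1)})$. For $r=3$ I would verify that the Cooper--Mubayi construction---a sparse 3-graph with controlled codegree and independence number $\Theta(\sqrt N)$---already does the job: its codegree and girth-like properties should forbid any non-trivial tight 3-tree, with the non-triviality hypothesis (no vertex lying in every edge of $T$) playing a decisive role in ruling out the degenerate configurations that the construction does contain.

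For $r=4$ the plan is to build an analogous 4-graph on $N$ vertices, aiming for independence number $O(N^{1/3})$ together with absence of a prescribed non-trivial tight 4-tree $T$. I would expect the vertex set to have a product structure and the edges to be determined by a small collection of algebraic or combinatorial identities, chosen so that any many-edge configuration glued along $(r-1)$-subsets is forced to share a common vertex. The main obstacle, and presumably the technical core of the paper, is the tight-4-tree-freeness: assuming a tentative embedding of $T$, one must follow the edges $e_1,\dots,e_t$ in the tight-tree order and show, using non-triviality of $T$, that the identities picked up at each step eventually become inconsistent. I expect this to be the genuinely delicate step, since the 4-uniform case introduces an additional codegree level and hence strictly more case analysis than the 3-uniform case.
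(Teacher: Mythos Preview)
Your plan matches the paper's architecture. The upper bound is exactly as you describe: the folklore Tur\'an bound $|E(H)|\le (k-1)\binom{N}{r-1}$ for $T$-free $r$-graphs (via iterated codegree cleaning and greedy embedding), followed by random deletion to extract a large independent set. For the lower bound, the paper uses the Cooper--Mubayi $L$-shape construction on $[n]^2$ for $r=3$ and a new construction on $[n]^3$ for $r=4$, the latter built from three carefully chosen signed-lexicographic tournaments $T_1,T_2,T_3$, with a 4-edge $\{P_0,P_1,P_2,P_3\}$ whenever $P_0P_i\in T_i$ for each $i$.

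One point is worth sharpening, because it changes where the work actually lives. The tight-tree-freeness is \emph{not} argued by tracing a full embedding of $T$ and accumulating constraints along $e_1,\dots,e_t$ until they become inconsistent. Instead, each construction comes with a notion of \emph{center} (a distinguished vertex $P_0$ in every edge), and the entire case analysis goes into a purely local statement: if two edges $e,e'$ satisfy $|e\cap e'|=r-1$, then they have the same center. Once that two-edge lemma is in hand, any tight $r$-tree in the hypergraph has all its edges sharing a common center by a one-line induction along the tight-tree order, and is therefore trivial. So the delicate step you anticipate is a two-edge lemma (a four-case check in the $4$-uniform setting), not a tree-wide argument, and non-triviality of $T$ enters only in the final sentence rather than driving the case analysis. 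You already gestured at this with ``forced to share a common vertex'', but your subsequent description of the mechanism drifts away from it; keeping the argument local is what makes it tractable.
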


We believe this result should extend to all $r$.
\begin{conj}
For $r\ge5$, if $T$ is a non-trivial tight $r$-tree, then there exist constants $c_1,c_2>0$ such that, for every positive integer $n$,
$$
c_1n^{r-1}\le R(T,n)\le c_2 n^{r-1}
$$
\end{conj}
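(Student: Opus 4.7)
The plan follows the same high-level strategy as \Cref{thm:main}. For the upper bound $R(T,n)\le c_2 n^{r-1}$, I would use a greedy embedding. Let $G$ be an $r$-graph on $N=c_2n^{r-1}$ vertices with $\alpha(G)<n$. A probabilistic-deletion argument forces $|E(G)|\ge\Omega(N^r/n^{r-1})$, so the average codegree of an $(r-1)$-subset that lies in some edge is $\Omega(c_2)$. Iteratively deleting vertices incident to low-codegree $(r-1)$-subsets produces a nonempty sub-$r$-graph $G'$ in which every $(r-1)$-face of an edge has codegree at least $|V(T)|$. Then embed $T$ along its tight-tree ordering $e_1,\dots,e_t$: place $e_1$ arbitrarily, and at step $i$ use the codegree of the already embedded $(r-1)$-set $e_i-v$ to pick the new vertex, avoiding the $O(1)$ previously used vertices. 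Since the argument depends on $r$ only through the constant $c_2=c_2(|V(T)|)$, it extends verbatim to $r\ge 5$.

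The lower bound $R(T,n)\ge c_1n^{r-1}$ is the substance of the conjecture. One needs, for each $n$, a $T$-free $r$-graph on $\Omega(n^{r-1})$ vertices with independence number less than $n$. I would look for an algebraic construction generalizing Cooper--Mubayi ($r=3$) and the author's construction ($r=4$). A natural candidate: fix a large prime power $q$, take vertex set $V=\mathbb{F}_q^{r-1}$, and declare the edges to be those $r$-subsets $\{v_1,\dots,v_r\}\subseteq V$ cut out by a carefully chosen system of low-degree polynomial equations---for instance $v_1+\cdots+v_r=0$ together with auxiliary conditions designed to trivialize any tight subtree. A standard character-sum or expander-mixing bound should yield $\alpha=O(q\cdot\mathrm{polylog}\,q)=O(N^{1/(r-1)}\cdot\mathrm{polylog}\,N)$, which suffices once $N\gg n^{r-1}$.

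The main obstacle is proving that the construction contains no non-trivial tight $r$-tree. The natural attack is induction along the tight-tree ordering from the definition: after embedding $e_1,\dots,e_{i-1}$, the algebraic edge constraint together with the $r-1$ already-placed vertices of $e_i$ should pin the new vertex to a $0$- or $1$-dimensional subvariety; if the overall embedding is non-trivial (no common coordinate across all embedded vertices), some chain of such restrictions must collapse and force a contradiction. Making this argument uniform in $r$ is the genuine difficulty: the combinatorial space of tight-tree shapes explodes as $r$ grows, and each shape imposes its own algebraic pattern. A plausible fallback is a recursive construction building the $r$-uniform example from an $(r-1)$-uniform one via a blow-up plus algebraic link function, allowing induction on $r$ to handle tight-tree avoidance directly---mirroring how the paper's $r=4$ construction is ``inspired by'' the $r=3$ one.
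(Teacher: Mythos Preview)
The statement you are attempting to prove is labeled a \emph{Conjecture} in the paper, not a theorem; the paper offers no proof of it. So there is nothing to compare your lower-bound argument against.

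Your upper-bound plan is fine and is essentially what the paper does in \Cref{section:Upper}: the paper uses the Tur\'an-type bound of \Cref{prop:Turan} (equivalently, your high-codegree greedy embedding) together with random deletion, and that argument is already stated and proved for all $r\ge 2$. So for the inequality $R(T,n)\le c_2 n^{r-1}$ there is no open problem and your sketch is adequate.

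The entire content of the conjecture is the lower bound $R(T,n)\ge c_1 n^{r-1}$ for $r\ge 5$, and here your proposal is not a proof but a research plan with an explicitly acknowledged gap (``Making this argument uniform in $r$ is the genuine difficulty''). Two concrete issues: (i) your algebraic candidate---edges given by $v_1+\cdots+v_r=0$ plus auxiliary polynomial constraints---is not specified well enough to verify either the independence-number bound or the tight-tree-freeness, and a single linear relation like $\sum v_i=0$ certainly does not by itself forbid non-trivial tight trees; (ii) your fallback of building the $r$-uniform construction recursively from the $(r-1)$-uniform one ``mirroring how the paper's $r=4$ construction is inspired by the $r=3$ one'' runs directly into the obstacle the paper records in its concluding remarks: the natural signed-lexicographic generalization of the $r=4$ construction to $r=5$ was checked by computer and \emph{fails} the analogue of \Cref{prop:center_4}. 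So the obvious recursive route is already known not to work, and any genuine proof will need a new idea that your proposal does not supply.
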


Note that the bounds in~\Cref{thm:main} could be wrong when $T$ is not non-trivial. In particular, when $T$ is a tight $r$-tree with two edges, Phelps and Rödl~\cite{phelps1986steiner} and Rödl and {\v{S}}inajov{\'a}~\cite{rodl1994note} showed that $R(T,n)=\Theta(n^{r-1}/\log n)$.

In fact, it is easy to show that for every tight $r$-tree $T$
$$
\frac{c_1n^{r-1}}{\log n}\le R(T,n)\le c_2n^{r-1}.
$$
The upper bounds come from the random deletion method (see Section 2 for details) and the lower bounds can be obtained from analyzing the Erdös-Rényi random hypergraph using the Lovász Local Lemma (See~\cite[Chapter 5]{alon2016probabilistic} for example). Thus the main difficulty in proving \Cref{thm:main} lies in finding constructions that establish the lower bounds. For $r=3$, we make use of a construction discovered by Cooper and Mubayi~\cite{cooper2017sparse}. For $r=4$, we find a new construction, which is inspired by the Cooper-Mubayi 3-uniform construction. Surprisingly, for $r=5$, natural generalizations of previous constructions do not work anymore; see the first bullet point in the concluding remarks section for more discussions on this.

The rest of this paper is structured as follows. In \Cref{section:Upper}, we prove the upper bounds in \Cref{thm:main}. In \Cref{section:lower_3}, we introduce the Cooper-Mubayi construction, which establishes the 3-uniform lower bounds in \Cref{thm:main}. In \Cref{section:lower_4}, we describe the 4-uniform construction, which implies the 4-uniform lower bounds in \Cref{thm:main}.

\section{Upper bounds}\label{section:Upper}
In this section, we prove the upper bounds in \Cref{thm:main}. Kalai made the following conjecture for tight trees extending a conjecture of Erd\H{o}s and Sós for graph trees.
\begin{conj}[\cite{erdos1964extremal,frankl1987exact}]
Let $r\ge 2$ and let $T$ be a tight $r$-tree with $k$ edges. An $n$-vertex $T$-free $r$-graph has at most $\frac{k-1}{r}\binom{n}{r-1}$ edges.
\end{conj}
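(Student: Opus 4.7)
The plan is to induct on the number of edges $k$ of $T$. The base case $k=1$ is trivial: the stated bound forces $|E(H)|=0$, and a single edge is itself a tight $r$-tree on one edge. For the inductive step, fix a tight-tree ordering $e_1,\dots,e_k$ of $E(T)$ and let $v\in e_k$ be the new vertex guaranteed by~\eqref{eq:tight tree}, so that $T':=T-v$ is a tight $r$-tree with $k-1$ edges. Assume toward a contradiction that $H$ is $n$-vertex, $T$-free, and satisfies $|E(H)|>\frac{k-1}{r}\binom{n}{r-1}$. Since $r\,|E(H)|=\sum_{S\in\binom{V(H)}{r-1}}\deg_H(S)$, the average $(r-1)$-codegree in $H$ exceeds $k-1$.

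The hope is to convert this average codegree bound into a greedy embedding of $T$ in tight-tree order. At step $i\ge 2$, condition~\eqref{eq:tight tree} identifies an $(r-1)$-subset $S_i$ of already-embedded vertices such that any new vertex $u$ with $S_i\cup\{u\}\in E(H)$ extends the partial embedding by one edge. Provided $\deg_H(S_i)\ge k+i-2$ whenever $S_i$ arises, we finish with a copy of $T$, contradicting $T$-freeness. The proof therefore reduces to a \emph{cleaning} procedure: remove vertices and edges from $H$ until every $(r-1)$-set appearing in the surviving $(r-1)$-shadow has codegree at least $k$, while preserving the density bound $\frac{k-1}{r}\binom{n'}{r-1}$ on the remaining $n'$ vertices. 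A telescoping deletion argument—removing any vertex whose deletion drops fewer than $\frac{k-1}{r}\binom{n'-1}{r-2}$ edges—maintains the density, and what remains is to verify that after cleaning either the minimum codegree already suffices to run the greedy embedding of $T$, or the inductive hypothesis applied to $T'$ yields an embedding whose terminal $(r-1)$-subset $e_k-v\subseteq e_s$ admits an unused extension.

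The central obstacle is the \emph{tightness} of the constant $\frac{k-1}{r}$: no slack can be lost in the cleaning step, so a naive minimum-codegree deletion is too wasteful. Even in the graph case $r=2$ this reduces to the Erd\H{o}s--S\'os conjecture, which is resolved only asymptotically for large trees via the Ajtai--Koml\'os--Simonovits--Szemer\'edi stability/absorption machinery; for $r\ge 3$ only isolated families (certain short tight paths, cf.~\cite{frankl1987exact}) are settled. A complete proof would likely demand a hypergraph regularity and absorption framework adapted to the tight-tree structure, so that at each embedding step the extension is spread randomly over many $(r-1)$-shadows at once rather than committed to a single $S_i$. Designing the correct absorbing gadget for an arbitrary tight tree—where a single new edge may be anchored on several previously placed edges along its $(r-1)$-subsets—strikes me as the crux; this is the ingredient I would expect to be the hardest to supply, and is why the conjecture has resisted proof in general.
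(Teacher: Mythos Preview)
The statement you were asked to prove is \emph{Kalai's conjecture}, and the paper does not prove it---it is stated as an open conjecture and then immediately bypassed in favour of the weaker folklore bound $(k-1)\binom{n}{r-1}$ (Proposition~\ref{prop:Turan}), which already suffices for the Ramsey upper bound. So there is no ``paper's proof'' to compare against, and your final paragraph is the correct assessment: even the case $r=2$ is the Erd\H{o}s--S\'os conjecture, and for $r\ge 3$ only sporadic families are known.

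Your preliminary outline does prove \emph{something}, but only the folklore bound, not the conjecture. The greedy embedding in tight-tree order goes through as soon as every $(r-1)$-set in the shadow has codegree at least $k$ (at step $i$ you only need to avoid the $i-1$ previously placed vertices outside $S_i$, so your threshold $k+i-2$ is higher than necessary; codegree $\ge k$ uniformly suffices). The standard cleaning is then: while some $(r-1)$-set $S$ has codegree $\le k-1$, delete all edges through $S$. A $T$-free $H$ must be emptied by this process, and since each of the at most $\binom{n}{r-1}$ iterations removes at most $k-1$ edges, one gets $|E(H)|\le (k-1)\binom{n}{r-1}$---exactly Proposition~\ref{prop:Turan}, and a factor of $r$ away from Kalai. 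Your vertex-deletion variant (``remove a vertex whose deletion drops fewer than $\tfrac{k-1}{r}\binom{n'-1}{r-2}$ edges'') does not close this gap either: once no such vertex exists you only know that every vertex lies in at least $\tfrac{k-1}{r}\binom{n'-1}{r-2}$ edges, which says nothing about $(r-1)$-codegrees and does not enable the greedy step. The obstacle you name---no slack in the constant $\tfrac{k-1}{r}$---is precisely why the conjecture is open, and nothing in your proposal surmounts it.
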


Although the conjecture has not been fully resolved, much progress has been made towards it~\cite{furedi2019hypergraphs,furedi2020tight,furedi2020hypergraphs}. For the purpose of this paper, we will only use the following folklore upper bounds, as we don't try to optimize the constant factor.

\begin{prop}[Proposition 5.4, \cite{furedi2015tur}]\label{prop:Turan}
Let $r\ge 2$ and let $T$ be a tight $r$-tree with $k$ edges. An $n$-vertex $T$-free $r$-graph has at most $({k-1})\binom{n}{r-1}$ edges.
\end{prop}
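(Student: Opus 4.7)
The plan is to prove the contrapositive: given an $n$-vertex $r$-graph $H$ with more than $(k-1)\binom{n}{r-1}$ edges, I will embed $T$ into $H$. The argument splits into a cleaning phase, which passes to a sub-$r$-graph with uniformly large codegree on its $(r-1)$-shadow, followed by a greedy embedding that follows the tight tree ordering.

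For the cleaning step I iterate the following operation: while some $(r-1)$-set $S$ satisfies $1\le d(S)<k$ in the current graph, delete every edge containing $S$. Each iteration removes fewer than $k$ edges, and once $S$ is processed its codegree becomes $0$ and stays $0$, so each of the at most $\binom{n}{r-1}$ possible $(r-1)$-sets can trigger a deletion at most once. Hence the total number of deletions is at most $(k-1)\binom{n}{r-1}$, which is strictly less than $e(H)$; consequently a nonempty sub-$r$-graph $H'$ remains, and by construction every $(r-1)$-subset contained in an edge of $H'$ has codegree at least $k$ in $H'$.

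For the embedding I follow the tight tree ordering $e_1,\ldots,e_k$ inductively. Map $e_1$ to any edge of $H'$. At stage $i\ge 2$ the tight tree definition supplies $v\in e_i$ and $s<i$ with $e_i-v\subseteq e_s$ and $v$ absent from all earlier edges, so the image of the $(r-1)$-set $e_i-v$ already sits inside the embedded edge corresponding to $e_s$, and therefore has codegree at least $k$ in $H'$. To pick an image for $v$, I must avoid the already-used vertices lying outside this $(r-1)$-set, a forbidden collection of size $|V(T_{i-1})|-(r-1)=i-1\le k-1$, which is strictly less than $k$, so a legal extension exists.

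The main bookkeeping point is matching the codegree threshold $k$ against the number $k-1$ of previously used vertices that could block an extension; this tight balance is precisely what produces the constant $k-1$ in the statement, and it fits naturally with the definition of a tight tree, in which each new edge is anchored by an $(r-1)$-subset of an already embedded edge. No further obstacle arises beyond this counting, so the two-step cleaning-plus-greedy argument should deliver the claimed bound.
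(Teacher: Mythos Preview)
Your argument is correct and is exactly the standard cleaning-plus-greedy proof of this folklore bound. Note, however, that the paper does not actually prove \Cref{prop:Turan}: it is quoted from \cite{furedi2015tur} and used as a black box, so there is no ``paper's own proof'' to compare against. What you have written is essentially the proof one finds in that reference, so nothing further is needed.
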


Now we are ready to prove the upper bounds in \Cref{thm:main}
\begin{prop}
Let $r\ge 2$ and let $T$ be a tight $r$-tree with $k$ edges, then
$$
R(T,n)\le 2(k-1)\l(\frac{2en}{r-1}\r)^{r-1}.
$$
\end{prop}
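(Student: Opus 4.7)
The plan is to apply the standard random deletion (alteration) method, feeding in \Cref{prop:Turan} as the bound on edge density. Set $N := 2(k-1)\l(\f{2en}{r-1}\r)^{r-1}$ and let $G$ be an arbitrary $T$-free $r$-graph on $N$ vertices; the task is to produce an independent set in $G$ of size at least $n$.

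First, I would use \Cref{prop:Turan} to get $e(G) \le (k-1)\binom{N}{r-1}$, then soften the binomial coefficient via the Stirling-type estimate $(r-1)! \ge ((r-1)/e)^{r-1}$ to obtain $e(G) \le (k-1)\l(\f{eN}{r-1}\r)^{r-1}$. Next, sample $S \sub V(G)$ by including each vertex independently with probability $p := 2n/N$, so that $\E[|S|]=2n$ and $\E[e(G[S])] \le p^r e(G)$. Removing one vertex from each edge of $G[S]$ yields an independent set of size at least $|S|-e(G[S])$, so by linearity of expectation there is an outcome producing an independent set of size at least $\E[|S|]-\E[e(G[S])]$.

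The key step is to verify $\E[e(G[S])]\le n$. Substituting $p=2n/N$ and simplifying gives
$$
\E[e(G[S])] \le (k-1)\cdot\f{2n}{N}\cdot\l(\f{2en}{r-1}\r)^{r-1},
$$
and the choice of $N$ is calibrated precisely so that the right-hand side is at most $n$. Consequently the expected size of the surviving independent set is at least $2n - n = n$, establishing $R(T,n)\le N$.

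I do not anticipate a substantive obstacle: the argument is a textbook random-alteration calculation once \Cref{prop:Turan} is available. The only subtlety worth flagging is constant management---the choice $p=2n/N$ together with the Stirling bound on $(r-1)!$ is exactly what is needed to land on the stated form $2(k-1)(2en/(r-1))^{r-1}$. A slightly different optimization of $p$ (e.g.\ equating the derivative of $pN - p^r e(G)$ to zero) would yield a bound of the same polynomial order with a somewhat different constant, but the version above has the cleanest dependence on $r$ and $k$.
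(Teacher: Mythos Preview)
Your proposal is correct and is essentially the same random-deletion argument the paper gives: both feed \Cref{prop:Turan} into the alteration method and bound $\binom{N}{r-1}$ via $(r-1)!\ge ((r-1)/e)^{r-1}$. The only cosmetic difference is that the paper writes the sampling probability as $p=\bigl(N/\bigl(2(k-1)\binom{N}{r-1}\bigr)\bigr)^{1/(r-1)}$ whereas you set $p=2n/N$ directly; after the Stirling estimate these coincide, and the computations match.
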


\begin{proof}
Let $N=2(k-1)\l(\frac{2en}{r-1}\r)^{r-1}$ and let $p=\l(\frac{N}{2(k-1)\binom{N}{r-1}}\r)^{\frac{1}{r-1}}$. Let $H$ be an $N$-vertex $T$-free $r$-graph. By \Cref{prop:Turan} we know that $H$ has at most $(k-1)\binom{N}{r-1}$ edges. We randomly select a subset $U\subseteq V(H)$ where each vertex is included independently with probability $p$. Then for each edge in $H[U]$, the induced subgraph of $H$ on $U$, we arbitrarily delete a vertex, and we call the remaining independent set $U'$. The expected size of $U'$ is at least
$$
pN-(k-1)\binom{N}{r-1}p^{r}=\frac{pN}{2}\ge \frac{r-1}{2e}\l(\frac{N}{2(k-1)}\r)^{\frac{1}{r-1}}=n.
$$
Thus we know that $\alpha(H)\ge n$, which comletes the proof.

\end{proof}

\section{The Cooper-Mubayi 3-uniform construction}\label{section:lower_3}
In this section, we introduce the Cooper-Mubayi 3-uniform construction, which implies the 3-uniform lower bounds in \Cref{thm:main}.

\textbf{Cooper-Mubayi construction:} Let $H_3$ be a 3-graph on $[n]^2$, where three vertices $P_0=(x_0,y_0)$, $P_1=(x_1,y_1)$ and $P_2=(x_2,y_2)$ form an edge if and only if $x_0=x_1$, $y_0<y_1$, $y_0=y_2$ and $x_0<x_2$. In other words, $H_3$ consists of triples of grid points that form an ``L shape''(See~\Cref{fig:L} for an example). For each such edge $e$, we call the vertex $P_0$ the \emph{center} of this edge $e$.

\begin{figure}[h]
    \centering
    \begin{tikzpicture}[scale=0.7]
        \tikzstyle{uStyle}=[shape = circle, minimum size = 6.0pt, inner sep = 0pt,
        outer sep = 0pt, draw, fill=white]
        \tikzstyle{fStyle}=[shape = circle, minimum size = 6.0pt, inner sep = 0pt,
        outer sep = 0pt, draw, fill=black]
        \tikzstyle{lStyle}=[shape = rectangle, minimum size = 20.0pt, inner sep = 0pt, outer sep = 2pt, draw=none, fill=none]
        \tikzset{every node/.style=uStyle}

         \foreach \i in {1,2,3,4,5,6}
            \foreach \j in {1,2,3,4,5,6}
                \draw (\i,\j) node (v\i\j){};
        \draw (1.7,1.7) node[lStyle](P0){$P_0$};
        \draw (1.7,4.7) node[lStyle](P1){$P_1$};
        \draw (3.7,1.7) node[lStyle](P2){$P_2$};
        \draw (2,2) node[fStyle](){};
        \draw (2,5) node[fStyle](){};
        \draw (4,2) node[fStyle](){};
        \draw[-latex] (0,0.5) -- (7,0.5);
        \draw (7,0) node[lStyle](){$1^{st}$ coordinate};
        \draw[-latex] (0.5,0) -- (0.5,7);
        \draw (-1.5,6.5) node[lStyle](){$2^{nd}$ coordinate};
    \end{tikzpicture}
    \caption{$P_0$,$P_1$ and $P_2$ form an edge in $H_3$}
    \label{fig:L}
\end{figure}
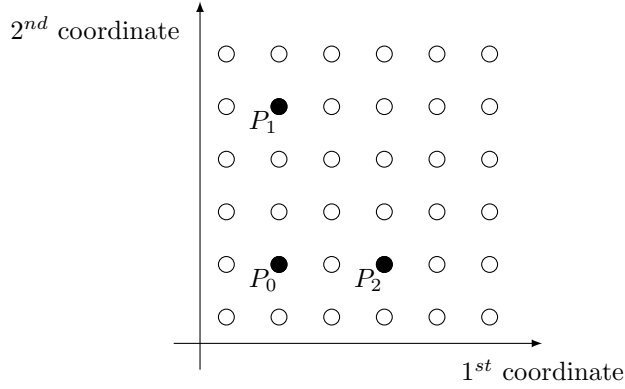

The following proposition is proved in~\cite{cooper2017sparse}. We include it here for completeness.

\begin{prop}\label{prop:independence_3}
Every $2n$-element set in $[n]^2$ contains an edge in $H_3$.
\end{prop}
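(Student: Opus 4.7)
The plan is to argue by contradiction: assume some $2n$-element set $S \subseteq [n]^2$ contains no edge of $H_3$, and exploit the fact that a point $P_0 = (x_0, y_0) \in S$ fails to be the center of any $H_3$-edge in $S$ exactly when one of the two ``arms'' of the L is missing. That is, $P_0$ is not a center if and only if either no $q \in S$ shares $P_0$'s column with strictly larger $y$-coordinate, or no $q \in S$ shares $P_0$'s row with strictly larger $x$-coordinate.

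Accordingly, define $T = \{P \in S : P \text{ has maximal second coordinate among points of } S \text{ in its column}\}$ and $R = \{P \in S : P \text{ has maximal first coordinate among points of } S \text{ in its row}\}$. Since $S$ contains no edge, every point of $S$ lies in $T \cup R$, so $S = T \cup R$. Each of the $n$ columns contributes at most one member to $T$, giving $|T| \le n$, and symmetrically $|R| \le n$.

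Now I would pinpoint a point forced to lie in $T \cap R$. Let $P^* \in S$ be the point that is lexicographically largest under the order ``first coordinate first, then second coordinate.'' Then $P^*$ has the largest first coordinate in $S$, so in particular no point of $S$ lies to its right in its row, putting $P^* \in R$. Among points of $S$ sharing this maximal first coordinate, $P^*$ also has the largest second coordinate, so no point of $S$ lies strictly above $P^*$ in its column, putting $P^* \in T$. Hence $|T \cap R| \ge 1$, and by inclusion–exclusion
\[
|S| \;=\; |T \cup R| \;=\; |T| + |R| - |T \cap R| \;\le\; n + n - 1 \;=\; 2n - 1,
\]
contradicting $|S| = 2n$.

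There is no serious obstacle here; the proof is a short extremal/pigeonhole argument, and the only thing to be careful about is the tie-breaking rule in the choice of $P^*$, which is what ensures membership in both $T$ and $R$ simultaneously.
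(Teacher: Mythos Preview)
Your argument is correct: if no point of $S$ is a center, every point is either topmost in its column or rightmost in its row, the lex-maximal point is both, and inclusion--exclusion gives $|S|\le 2n-1$.

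This is a genuinely different route from the paper's proof. The paper builds the bipartite incidence graph on rows and columns with the points of $X$ as edges; since this graph has $2n$ edges on $2n$ vertices it contains a cycle, and inside any such cycle one can locate three consecutive edges $c_{j'}r_i,\,r_ic_j,\,c_jr_{i'}$ with $i'>i$ and $j'>j$, which correspond to an L-shaped triple in $X$. Your approach avoids the auxiliary graph and the cycle-finding step entirely, replacing them with a direct extremal count plus a single well-chosen witness point; it is shorter and more self-contained. The paper's approach, on the other hand, explicitly exhibits the edge rather than reaching a contradiction, and the bipartite-graph viewpoint is the one that more visibly generalizes to the $4$-uniform argument later in the paper (where tournaments on coordinate slices play the analogous role).
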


\begin{proof}
Let $X$ be a $2n$-element set in $[n]^2$. Consider a bipartite graph $B$ on $R\sqcup C$ where $R=\{r_1,r_2,\dots,r_n\}$ and $C=\{c_1,c_2,\dots,c_n\}$ such that $r_i$ and $c_j$ form an edge if and only if $(i,j)\in X$. We can view $R$ and $C$ as the set of rows and the set of columns of the $n\times n$ grid respectively. Note that the number of edges in $B$ is $|X|=2n$ which equals the number of vertices in $B$. So $B$ contains a cycle. In the cycle, there must be three consecutive edges $c_{j'}r_i$, $r_ic_j$, $c_jr_{i'}$ such that $i'>i$ and $j'>j$. Clearly, such a triple of edges corresponds to a triple of points in $X$ which forms an edge in $H_3$.
\end{proof}

\begin{prop}\label{prop:center_3}
For any $e,e'\in H_3$, if $|e\cap e'|=2$, then they have the same center.
\end{prop}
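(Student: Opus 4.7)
My plan is to do a short case analysis on the pair $e\cap e'$ of shared vertices, organized by the coordinate relationship between them. The key observation is that in any L-shape edge $\{P_0,P_1,P_2\}$ with center $P_0$, the three unordered pairs of vertices fall into exactly three distinct types: the pair $\{P_0,P_1\}$ shares its first coordinate, the pair $\{P_0,P_2\}$ shares its second coordinate, and the pair $\{P_1,P_2\}$ shares neither coordinate. Moreover, among any two points sharing a coordinate, the center of the edge is determined by a simple rule (smaller second coordinate if they share the first coordinate; smaller first coordinate if they share the second coordinate). I would begin the proof by recording these facts.

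Then I would split on the three types of the shared pair $\{u,v\}=e\cap e'$. If $u$ and $v$ share the first coordinate, then in each of $e,e'$ they form the ``vertical leg'' of the L and, by the rule above, the center of each edge is whichever of $u,v$ has the smaller second coordinate; so $e$ and $e'$ have the same center. The case where $u,v$ share the second coordinate is completely analogous.

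The only case left, and the one that really needs to be ruled out, is when $u=(a,b)$ and $v=(c,d)$ share no coordinate, i.e.\ $a\neq c$ and $b\neq d$. Here I would show that the third vertex of any edge containing $\{u,v\}$ is \emph{uniquely determined}, which contradicts $|e\cap e'|=2$. Indeed, the center of any such edge must share a coordinate with both $u$ and $v$, so it lies in $\{(a,d),(c,b)\}$; the conditions that the center has strictly smaller first coordinate than the horizontal partner and strictly smaller second coordinate than the vertical partner immediately single out one of these two candidates (namely, if $a<c$ one needs $d<b$ and the center is forced to be $(a,d)$, while $(c,b)$ fails the inequality constraints). Hence $e=e'$, contradicting $|e\cap e'|=2$.

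No step looks like a real obstacle here; the only place to be careful is the ``no shared coordinate'' case, where one needs to invoke the two strict inequalities in the definition of $H_3$ to pin down the center uniquely and thereby rule the case out.
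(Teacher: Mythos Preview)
Your argument is correct. It differs from the paper's proof in organization rather than in substance. The paper assumes the two centers are distinct and then does a case analysis on how the center $P_0$ of $e$ sits inside $e'$ (either $P_0=P'_1$ up to the obvious symmetries, or $P_0\notin e'$ and $P'_0\notin e$), deriving a contradiction in each case via coordinate inequalities. You instead classify the shared pair $\{u,v\}$ by an \emph{intrinsic} property---whether $u$ and $v$ agree in the first coordinate, the second coordinate, or neither---and then read off the center directly from the pair. Your decomposition is arguably cleaner: it avoids the ``up to symmetry'' reduction and makes explicit the fact that the role a pair plays in an L-shape is determined by its coordinate pattern. The paper's approach, on the other hand, mirrors the structure of the $4$-uniform proof (Proposition~4.3), where the case split by ``which $P_i$ equals $P'_0$'' is the natural one, so it has the virtue of parallelism with the harder argument. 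Both proofs are short and rely on the same two strict inequalities in the definition of $H_3$.
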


\begin{proof}
Let $e=\{P_0,P_1,P_2\}$ and $e'=\{P'_0,P'_1,P'_2\}$ where $P_0$ and $P'_0$ are the centers of $e_1$ and $e_2$ respectively. Further, for $0\le i\le 2$, let $P_i=(x_i,y_i)$ and let $P'_i=(x'_i,y_i')$. Without loss of generality, suppose $x_1=x_0$, and $x'_1=x'_0$. Then by definition we have $y_1>y_0$, $y'_1>y'_0$, $y_2=y_0$, $y'_2=y'_0$, $x_2>x_0$ and $x'_2>x'_0$. Suppose (for contradiction) $P_0\not=P'_0$. Up to symmetry, the proof splits into two cases.

\textbf{Case 1: $P_0=P'_1$.} In this case, we have $y_1,y_2\ge y_0=y'_1>y'_0, y'_2$, and thus $\{P_1,P_2\}\cap\{P_0',P_2'\}=\emptyset$, which contradicts the condition $|e\cap e'|=2$.

\textbf{Case 2: $P_0\not\in e'$ and $P'_0\not\in e$.} Since $e\cap e'|=2$, we have $e\cap e'=\{P_1,P_2\}=\{P'_1,P'_2\}$. Note that $x_1<x_2$ and $x'_1<x'_2$. This implies that $P_1=P'_1$ and $P_2=P'_2$, and hence $P_0=P'_0$, contradicting the assumption that $P_0\not=P'_0$.
\end{proof}

\begin{prop}\label{prop:forbiden_3}
$H_3$ does not contain any non-trivial tight $3$-tree.
\end{prop}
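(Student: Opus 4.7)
The plan is to combine \Cref{prop:center_3} with the defining ordering of a tight tree to show that every edge of any tight $3$-tree embedded in $H_3$ must share a common center vertex, forcing the tree to be trivial.

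More precisely, I would suppose for contradiction that $T \subseteq H_3$ is a non-trivial tight $3$-tree with edges ordered $e_1,\ldots,e_t$ satisfying (\ref{eq:tight tree}), and I would prove by induction on $i$ that all edges $e_1,\ldots,e_i$ share the same center (viewed as edges of $H_3$). The base case $i=1$ is vacuous since $e_1$ has a well-defined center by the construction of $H_3$. For the inductive step, the tight-tree condition guarantees some $s<i$ and some new vertex $v\in e_i$ with $e_i-v\subset e_s$, so in particular $|e_i\cap e_s|=2$. Then \Cref{prop:center_3} tells us that $e_i$ and $e_s$ share the same center, and by the inductive hypothesis this center is also the common center of $e_1,\ldots,e_{i-1}$.

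Once the induction is complete, every edge of $T$ contains this common center vertex, so $\cap_{e\in T} e\neq \emptyset$, contradicting the non-triviality of $T$.

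I do not anticipate a serious obstacle: the proof is essentially a direct application of \Cref{prop:center_3} combined with the defining recursive structure of a tight tree, and the only thing to be slightly careful about is that each edge of $H_3$ has a uniquely defined center (which is clear from the construction, since the center is the unique vertex sharing a coordinate with both other vertices of the edge).
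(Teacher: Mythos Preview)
Your proposal is correct and follows essentially the same approach as the paper: use the tight-tree ordering together with \Cref{prop:center_3} to conclude that all edges share a common center, whence $T$ is trivial. If anything, your inductive formulation is slightly more careful than the paper's, which asserts $|e_i\cap e_{i+1}|=2$ for consecutive edges rather than invoking the correct $e_s$ from the tight-tree condition; your version handles this cleanly.
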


\begin{proof}
Let $T$ be a tight $3$-tree in $H_3$. Let $e_1,\dots,e_t$ be an order on the edges of $T$ that satisfies \Cref{eq:tight tree}. Note that, $1\le i\le t-1$, $|e_i\cap e_{i+1}|=2$. Thus by \Cref{prop:center_3} all edges $e_1,\dots,e_t$ share the same center, which implies that $T$ is trivial.
\end{proof}

The 3-uniform lower bounds in \Cref{thm:main} follow immediately from \Cref{prop:forbiden_3} and \Cref{prop:independence_3}.

\section{The 4-uniform Construction}\label{section:lower_4}
In this section, we introduce the 4-uniform construction inspired by the Cooper-Mubayi construction, which establishes the 4-uniform lower bounds in \Cref{thm:main}.

For every point $P\in [n]^3$ and every $1\le i\le 3$, let $P(i)$ denote the $i^{th}$ coordinate of $P$. We define directed graphs $T_1, T_2$ and $T_3$ on $[n]^3$ as follows. For convenience, we write a directed edge $(P,Q)$ as $PQ$. For every pair $P,Q\in [n]^3$, 
\begin{itemize}
    \item[(1)] $PQ\in T_1$ if and only if 
$P(1)=Q(1)$
and either
\begin{equation*}
P(2)<Q(2)    
\end{equation*}
or
\begin{equation*}
P(2)=Q(2)~\text{and}~P(3)>Q(3).    
\end{equation*}

\Cref{fig:T1} is a visual illustration of $T_1$, which depicts the possible positions of $Q$ relative to $P$. 

\begin{figure}[h]
    \centering
    \begin{tikzpicture}[scale=0.7]
        \tikzstyle{uStyle}=[shape = circle, minimum size = 6.0pt, inner sep = 0pt,
        outer sep = 0pt, draw, fill=white]
        \tikzstyle{fStyle}=[shape = circle, minimum size = 6.0pt, inner sep = 0pt,
        outer sep = 0pt, draw, fill=black]
        \tikzstyle{sStyle}=[shape = regular polygon,regular polygon sides=4, minimum size = 8.5pt, inner sep = 0pt,
        outer sep = 0pt, draw, fill=black]
        \tikzstyle{lStyle}=[shape = rectangle, minimum size = 20.0pt, inner sep = 0pt, outer sep = 2pt, draw=none, fill=none]
        \tikzset{every node/.style=uStyle}

         \foreach \i in {1,2,3,4,5,6,7}
            \foreach \j in {1,2,3,4,5,6,7}
                \draw (\i,\j) node (v\i\j){};
        \draw (3.7,3.7) node[lStyle](P){$P$};
        \draw (4,4) node[fStyle](){};
        \foreach \i in {5,6,7}
            \foreach \j in {1,...,7}
                \draw (\i,\j) node[sStyle](){};
        \foreach \j in {1,2,3}
            \draw (4,\j) node[sStyle](){};
        \draw[-latex] (0,0.5) -- (8,0.5);
        \draw (8,0) node[lStyle](){$2^{nd}$ coordinate};
        \draw[-latex] (0.5,0) -- (0.5,8);
        \draw (-1.5,7.5) node[lStyle](){$3^{rd}$ coordinate};
    \end{tikzpicture}
    \caption{Here the $\blacksquare$s are the possible positions of $Q$ if $PQ\in T_1$.}
    \label{fig:T1}
\end{figure}
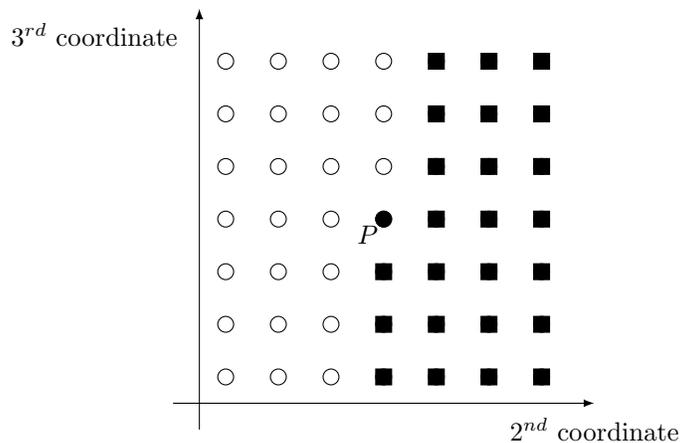

    \item[(2)] $PQ\in T_2$ if and only if 
$P(2)=Q(2)$
and either
\begin{equation*}P(1)<Q(1)    
\end{equation*}
or
\begin{equation*}
P(1)=Q(1)~\text{and}~P(3)<Q(3),    
\end{equation*}

\Cref{fig:T2} is a visual illustration of $T_2$, which depicts the possible positions of $Q$ relative to $P$. 

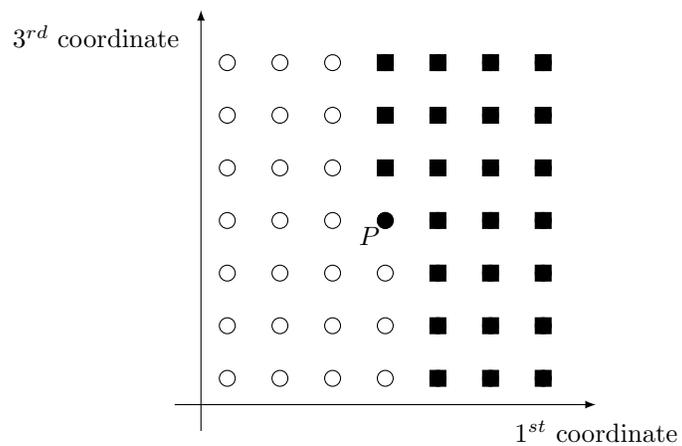
\begin{figure}[h]
    \centering
    \begin{tikzpicture}[scale=0.7]
        \tikzstyle{uStyle}=[shape = circle, minimum size = 6.0pt, inner sep = 0pt,
        outer sep = 0pt, draw, fill=white]
        \tikzstyle{fStyle}=[shape = circle, minimum size = 6.0pt, inner sep = 0pt,
        outer sep = 0pt, draw, fill=black]
        \tikzstyle{sStyle}=[shape = regular polygon,regular polygon sides=4, minimum size = 8.5pt, inner sep = 0pt,
        outer sep = 0pt, draw, fill=black]
        \tikzstyle{lStyle}=[shape = rectangle, minimum size = 20.0pt, inner sep = 0pt, outer sep = 2pt, draw=none, fill=none]
        \tikzset{every node/.style=uStyle}

         \foreach \i in {1,2,3,4,5,6,7}
            \foreach \j in {1,2,3,4,5,6,7}
                \draw (\i,\j) node (v\i\j){};
        \draw (3.7,3.7) node[lStyle](P){$P$};
        \draw (4,4) node[fStyle](){};
        \foreach \i in {5,6,7}
            \foreach \j in {1,...,7}
                \draw (\i,\j) node[sStyle](){};
        \foreach \j in {5,6,7}
            \draw (4,\j) node[sStyle](){};
        \draw[-latex] (0,0.5) -- (8,0.5);
        \draw (8,0) node[lStyle](){$1^{st}$ coordinate};
        \draw[-latex] (0.5,0) -- (0.5,8);
        \draw (-1.5,7.5) node[lStyle](){$3^{rd}$ coordinate};
    \end{tikzpicture}
    \caption{Here the $\blacksquare$s are the possible positions of $Q$ if $PQ\in T_2$.}
    \label{fig:T2}
\end{figure}

\item[(3)] $PQ\in T_3$ if and only if 
$P(3)=Q(3)$
and either
\begin{equation*}
P(2)>Q(2)    
\end{equation*}
or
\begin{equation*}
P(2)=Q(2)~\text{and}~P(1)>Q(1).    
\end{equation*}
\end{itemize}

\Cref{fig:T3} is a visual illustration of $T_3$, which depicts the possible positions of $Q$ relative to $P$. 
\begin{figure}[h]
    \centering
    \begin{tikzpicture}[scale=0.7]
        \tikzstyle{uStyle}=[shape = circle, minimum size = 6.0pt, inner sep = 0pt,
        outer sep = 0pt, draw, fill=white]
        \tikzstyle{fStyle}=[shape = circle, minimum size = 6.0pt, inner sep = 0pt,
        outer sep = 0pt, draw, fill=black]
        \tikzstyle{sStyle}=[shape = regular polygon,regular polygon sides=4, minimum size = 8.5pt, inner sep = 0pt,
        outer sep = 0pt, draw, fill=black]
        \tikzstyle{lStyle}=[shape = rectangle, minimum size = 20.0pt, inner sep = 0pt, outer sep = 2pt, draw=none, fill=none]
        \tikzset{every node/.style=uStyle}

         \foreach \i in {1,2,3,4,5,6,7}
            \foreach \j in {1,2,3,4,5,6,7}
                \draw (\i,\j) node (v\i\j){};
        \draw (3.7,3.7) node[lStyle](P){$P$};
        \draw (4,4) node[fStyle](){};
        \foreach \j in {1,2,3}
            \foreach \i in {1,...,7}
                \draw (\i,\j) node[sStyle](){};
        \foreach \i in {1,2,3}
            \draw (\i,4) node[sStyle](){};
        \draw[-latex] (0,0.5) -- (8,0.5);
        \draw (8,0) node[lStyle](){$1^{st}$ coordinate};
        \draw[-latex] (0.5,0) -- (0.5,8);
        \draw (-1.5,7.5) node[lStyle](){$2^{nd}$ coordinate};
    \end{tikzpicture}
    \caption{Here the $\blacksquare$s are the possible positions of $Q$ if $PQ\in T_3$.}
    \label{fig:T3}
\end{figure}
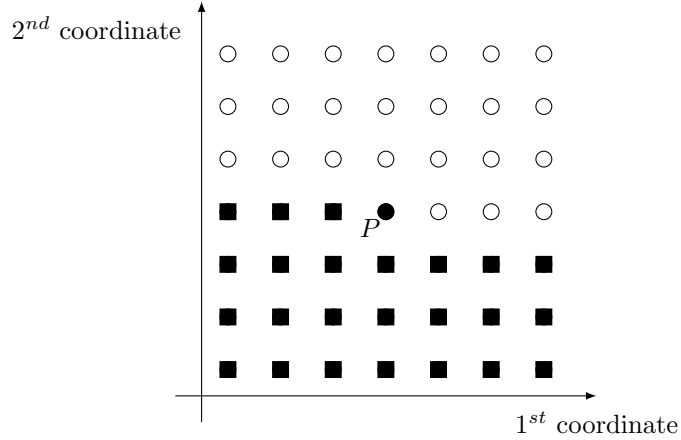

The following proposition can be easily checked by looking at Figures \ref{fig:T1}, \ref{fig:T2} and \ref{fig:T3}.
\begin{prop}\label{prop:disjoint}
If $PP_1\in T_1$, $PP_2\in T_2$ and $PP_3\in T_3$, then $P_1$, $P_2$ and $P_3$ are distinct.
\end{prop}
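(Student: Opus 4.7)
The plan is to prove pairwise distinctness of $P_1,P_2,P_3$ by contradiction for each pair. The key observation driving the whole argument is that each $T_i$ forces equality between the $i$-th coordinates of $P$ and $P_i$, and these three anchoring coordinates are different. So if two of the $P_i$'s coincide, the common point shares two coordinates with $P$, and the defining disjunctions of the relevant $T_i$'s collapse to single comparisons on the remaining coordinate; this remaining comparison then turns out to be inconsistent between the two types.

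First I would handle $P_1 \neq P_2$. Assuming $P_1 = P_2$, the relation $PP_1 \in T_1$ gives $P_1(1) = P(1)$ and $PP_2 \in T_2$ gives $P_1(2) = P(2)$. With both of these in force, the surviving alternative in the $T_1$ condition requires $P(3) > P_1(3)$, while the surviving alternative in the $T_2$ condition requires $P(3) < P_1(3)$, an immediate contradiction.

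Next I would handle $P_1 \neq P_3$ and $P_2 \neq P_3$ in the same spirit. For $P_1 = P_3$, combining $T_1$ and $T_3$ yields $P_1(1) = P(1)$, $P_1(3) = P(3)$, together with $P_1(2) \ge P(2)$ (from $T_1$) and $P_1(2) \le P(2)$ (from $T_3$), hence $P_1(2) = P(2)$; then the residual $T_1$ requirement demands $P(3) > P_1(3) = P(3)$, contradiction. For $P_2 = P_3$, $T_2$ forces $P_2(2) = P(2)$ and $P_2(1) \ge P(1)$, while the $T_3$ disjunction, given $P_3(2) = P(2)$, forces $P_2(1) < P(1)$, again a contradiction.

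I do not anticipate any real obstacle; the proof is routine case analysis, which is precisely why the author remarks that it ``can be easily checked by looking at Figures \ref{fig:T1}, \ref{fig:T2} and \ref{fig:T3}'' — once the anchoring coordinate is fixed, the three ``possible positions of $Q$'' regions in the figures are visibly pairwise disjoint.
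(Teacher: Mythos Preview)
Your case analysis is correct and is precisely the verification the paper leaves to the reader: the paper gives no formal argument at all, merely stating that the proposition ``can be easily checked by looking at Figures~\ref{fig:T1}, \ref{fig:T2} and \ref{fig:T3}.'' Your write-up makes explicit what those pictures encode, so there is nothing to add.
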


\textbf{The 4-uniform construction:} Let $H_{4}$ be the $4$-graph on $[n]^3$ such that four distinct points $P_0,P_1,P_2$ and $P_3$ form an edge $e$ if and only if, $\forall 1\le i\le 3$, $P_0P_i\in T_i$, and we call $P_0$ the {\em center} for this edge $e$.

\begin{prop}\label{prop:independence_4}
Every $10n$-element set in $[n]^3$ contains an edge in $H_{4}$.
\end{prop}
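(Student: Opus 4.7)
The plan is to locate a single point $P_0 \in X$ that has an out-neighbor inside $X$ under each of the three relations $T_1, T_2, T_3$; such a $P_0$ will be the center of an edge of $H_4$ contained in $X$. Once the three out-neighbors $P_1, P_2, P_3$ are found, \Cref{prop:disjoint} guarantees they are distinct from each other, and none of them can coincide with $P_0$ because each $T_i$ forces a strict inequality in some coordinate, so contains no loop.

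The key observation supporting this plan is that, for every $i \in \{1,2,3\}$ and every $a \in [n]$, the relation $T_i$ restricted to the slice $S_{i,a} = \{P \in [n]^3 : P(i) = a\}$ is a strict \emph{total} order. Indeed, within a slice two distinct points either differ in the ``primary'' free coordinate used in the first disjunct of the definition of $T_i$, or agree there and are separated by the tie-breaking inequality on the remaining coordinate. In particular, $X \cap S_{i,a}$ has a unique $T_i$-maximum, and this maximum is the only point of that slice with no $T_i$-out-neighbor in $X$.

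For each $P \in X$ and $i \in \{1,2,3\}$ set $f_i(P) = 1$ if no $Q \in X$ satisfies $P Q \in T_i$, and $f_i(P) = 0$ otherwise. The previous paragraph gives $\sum_{P \in X} f_i(P) \le n$, since the contributing points are exactly the $T_i$-maxima of the at most $n$ nonempty slices in direction $i$. Summing,
$$
\sum_{P \in X} \bigl(f_1(P) + f_2(P) + f_3(P)\bigr) \;\le\; 3n \;<\; 10n \;=\; |X|,
$$
so some $P_0 \in X$ has $f_1(P_0) = f_2(P_0) = f_3(P_0) = 0$. Choosing $P_i \in X$ with $P_0 P_i \in T_i$ for each $i$ then produces, via \Cref{prop:disjoint} and the loop-freeness of $T_i$, four distinct points of $X$ forming an edge of $H_4$.

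I do not foresee a real obstacle: the only subtle step is recognising that the tie-breaking clauses in $T_1, T_2, T_3$ promote each relation from a partial to a total order on its slices, which is what allows a direct double-counting argument rather than a cycle-based one as in the $3$-uniform case. The constant $10$ is slack; the argument above actually shows $3n+1$ points suffice.
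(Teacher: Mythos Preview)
Your argument is correct and follows the same high-level plan as the paper: locate a point $P_0\in X$ with positive $T_i$-outdegree in $X$ for every $i$, then invoke \Cref{prop:disjoint}. Both proofs rest on the observation that each $T_i$ restricted to a fixed-$i^{\text{th}}$-coordinate slice is a tournament (indeed a total order), so that slice contributes at most one point of zero outdegree.

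The execution differs. The paper first runs an iterative cleaning step, deleting any slice that meets $X$ in at most three points; this removes at most $9n$ points, leaving a nonempty $X'$ in which every surviving slice has at least four points, so the zero-outdegree sets $B_i$ satisfy $|B_i|\le |X'|/4$ and a pigeonhole on $X'$ finishes. You bypass the cleaning entirely and double-count directly: each of the at most $n$ slices in direction $i$ has a single $T_i$-maximum in $X$, so at most $3n$ points of $X$ fail in some direction, and $|X|>3n$ suffices. Your route is shorter and yields the sharper threshold $3n+1$ rather than $10n$; the paper's deletion step is unnecessary overhead here, though that style of argument can be useful when one needs stronger local density (e.g.\ several out-neighbors per direction) rather than just one.
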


\begin{proof}
Let $X$ be a $10n$-element set in $[n]^3$. For $1\le i\le 3$ and $1\le j\le n$, let $X_{ij}\subseteq [n]^3$ be the set consisting of all points $P$ such that $P(i)=j$. We iteratively delete elements in $X_{ij}$ from $X$ if $|X\cap X_{ij}|\le 3$. Since the number of such $X_{ij}$ is at most $3n$, we know that the number of deleted vertices is at most $9n$.

Thus after all deletions, we still have a nonempty set $X'$. We want to find a $P_0\in X'$ such that $P_0$ has non-zero outdegree in all digraphs $T_i[X']$. To this end, for every $1\le i\le 3$, we let $B_i$ be the set of elements $P$ such that $P$ has outdegree 0 in $T_i[X']$. By definition, for every pair $ij$, if $X'\cap X_{ij}\not=\emptyset$, then $|X'\cap X_{ij}|\ge 4$. Note that $T_i[X'\cap X_{ij}]$ is a tournament, thus there is at most one vertex in $X'\cap X_{ij}$ with outdegree 0 in $T_i[P']$. So we have $|B_i|\le |X'|/4$, and hence 
$$
|X'\setminus(\cup_{i=1}^3 B_i)|\ge |X'|-\sum_{i=1}^3|B_i|\ge |X'|/4>0.
$$
Thus there exists a $P_0\in X'$ such that $P$ has non-zero outdegree in all digraphs $T_i[X']$. So we can pick $P_1$, $P_2$ and $P_3\in X'$ such that $P_0P_1\in T_1$, $P_0P_2\in T_2$ and $P_0P_3\in T_3$. By \Cref{prop:disjoint}, we know that $P_1$, $P_2$ and $P_3$ are distinct. Therefore, $P_0$,$P_1$,$P_2$ and $P_3$ form an edge in $H_4$.
\end{proof}

\begin{prop}\label{prop:center_4}
For any $e,e'\in H_4$, if $|e\cap e'|=3$, then they have the same center.
\end{prop}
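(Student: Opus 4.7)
The plan is to assume $P_0 \neq P'_0$ and show that this forces $|e \cap e'| \leq 2$, contradicting the hypothesis. The overall strategy mirrors the proof of \Cref{prop:center_3} but is more involved, since each edge now has three arms instead of two. I split into cases based on whether $P_0 \in e'$ (or symmetrically $P'_0 \in e$).

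\textbf{Case 1: $P_0 \in e'$.} Then $P_0 = P'_j$ for some $j \in \{1,2,3\}$, so $P'_0 P_0 \in T_j$. This forces $P_0(j) = P'_0(j)$ together with a strict lex order between the remaining two coordinates of $P_0$ and $P'_0$. For each such $j$ and each of the two tie-breaking branches of $T_j$, I would inspect every possible identification $P_i = P'_k$ with $i \in \{1,2,3\}$, combining the defining inequalities of $P_0 P_i \in T_i$ and $P'_0 P'_k \in T_k$ with the coordinate relations between $P_0$ and $P'_0$. In each instance the inequalities either contradict each other immediately, or allow only a single admissible pair $(i,k)$; the net conclusion is that at most one $P_i$ (beyond $P_0$ itself) can lie in $e'$, giving $|e \cap e'| \leq 2$. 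Swapping $e$ and $e'$ covers the symmetric case $P'_0 \in e$, $P_0 \notin e'$.

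\textbf{Case 2: $P_0 \notin e'$ and $P'_0 \notin e$.} Then $\{P_1, P_2, P_3\} = \{P'_1, P'_2, P'_3\}$ as sets, so there is a permutation $\sigma \in S_3$ with $P_i = P'_{\sigma(i)}$. The key structural identity is
\[ P_0 = (P_1(1),\, P_2(2),\, P_3(3)), \]
which is an immediate consequence of $P_0 P_i \in T_i$, and the analogous identity holds for $P'_0$. If $\sigma = \mathrm{id}$, this yields $P_0 = P'_0$, contradicting our assumption. For each of the five non-identity $\sigma$, I substitute $P_i = P'_{\sigma(i)}$ into both $P_0 P_i \in T_i$ and $P'_0 P'_{\sigma(i)} \in T_{\sigma(i)}$; pairing these lex-order inequalities coordinate by coordinate either forces every coordinate of $P_0$ and $P'_0$ to agree (again contradicting $P_0 \neq P'_0$), or produces contradictory constraints on one of the shared arms.

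The main obstacle is purely combinatorial bookkeeping: Case 1 has six subcases (three choices of $j$, each with two tie-breaking branches) and Case 2 requires checking five non-identity permutations. Each individual subcase, however, reduces to a short chain of lex-order inequalities among the handful of points involved, and the identity $P_0 = (P_1(1), P_2(2), P_3(3))$, together with transitivity of each $T_i$ on its fixed-coordinate fiber, is the one structural observation that drives all the checks.
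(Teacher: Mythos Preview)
Your plan is correct and the case split matches the paper's exactly (its Case~1 is your Case~2; its Cases~2--4 are your Case~1 with $j=1,2,3$ after swapping $e\leftrightarrow e'$; and its secondary split on, e.g., $P'_0(2)>P_0(2)$ versus $P'_0(2)=P_0(2)$ is precisely your two tie-breaking branches of $T_j$). The difference is in execution. Rather than checking the nine identifications $P_i=P'_k$ by hand in each of your six subcases, the paper packages each signed lex order as an integer-valued rank, e.g.\ $F_{21}(P)=P(2)(n{+}1)+P(1)$ and $F_{2(-1)(-3)}(P)=P(2)(n{+}1)^2-P(1)(n{+}1)-P(3)$. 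For your Case~2 this collapses the work: $F_{21}$ shows $P_3$ (and likewise $P'_3$) is the unique minimizer among the three arms, then $F_{2(-1)(-3)}$ separates $P_1$ from $P_2$, forcing $\sigma=\mathrm{id}$ without examining the five non-identity permutations; your identity $P_0=(P_1(1),P_2(2),P_3(3))$ then finishes. In your Case~1 the same functions dispatch most ``$P'_k\notin e$'' claims in one line each (for instance, when $P'_0=P_1$ one gets $P'_1\neq P_0,P_2,P_3$ from three such inequalities immediately). Your brute-force scheme would go through, but the rank-function device eliminates most of the bookkeeping you anticipate and is worth adopting.
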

\begin{proof}
Let $e=\{P_0,P_1,P_2,P_3\}$ and $e'=\{P_0',P_1',P_2',P_3'\}$ where $P_0P_i, P_0'P'_i\in T_i$ for every $1\le i\le 3$. Suppose (for contradiction) that $|e\cap e'|=3$ and that $e$ and $e'$ have different centers, i.e. $P_0\not=P'_0$. Up to symmetry, the proof splits into the following four cases:

\noindent\textbf{Case 1: $P_0\not\in e'$ and $P_0'\not\in e$.}

In this case, we have $\{P_1,P_2,P_3\}=\{P'_1,P'_2,P'_3\}$. We will show that $P_1=P_1'$, $P_2=P_2'$ and $P_3=P_3'$. 

Define the function $F_{21}:[n]^3\rightarrow \Z$ such that 
\begin{equation}\label{eq:21}
    F_{21}(P)=P(2)(n+1)+P(1)
\end{equation}

It is easy to check that, by definition, for every $t\in \{1,2\}$, we have $F_{21}(P_3)<F(P_t)$ and $F_{21}(P'_3)<F(P'_t)$. Thus $P_3=P_3'$. 

Similarly, define the function $F_{2(-1)(-3)}:[n]^3\rightarrow \Z$ such that 
\begin{equation}\label{eq:2-1-3}
F_{2(-1)(-3)}(P)=P(2)(n+1)^2-P(1)(n+1)-P(3).
\end{equation}
It is easy to check that, by definition, $F_{2(-1)(-3)}(P_2)<F(P_1)$ and $F_{2(-1)(-3)}(P'_2)<F(P'_1)$. Thus we have $P_2=P_2'$ and $P_1=P_1'$. 

Note that $P(i)=P_i(i)=P'_i(i)=P'(i)$ for every $1\le i\le 3$. This implies that $P=P'$, which contradicts our assumption that $P\not=P'$.  

\medskip

\noindent\textbf{Case 2: $P_0'=P_1$.}

We will first show that $P_1'\not\in e$. 

Define the function $F_{2(-3)}:[n]^3\rightarrow \Z$ such that 
\begin{equation}\label{eq:2-3}
F_{2(-3)}(P)=P(2)(n+1)-P(3).   
\end{equation}
Then by definition 
$$F_{2(-3)}(P_1')>F_{2(-3)}(P_0')=F_{2(-3)}(P_1)>F_{2(-3)}(P_0).$$ 
Thus $P_1'\not=P_0$.

Recall \Cref{eq:2-1-3}. By definition, 
$$F_{2(-1)(-3)}(P'_1)>F_{2(-1)(-3)}(P_0')=F_{2(-1)(-3)}(P_1)>F_{2(-1)(-3)}(P_2).$$ 
Thus $P'_1\not=P_2$. 

Recall \Cref{eq:21}. By definition, 
$$F_{21}(P'_1)\ge F_{21}(P'_0)=F_{21}(P_1)>F_{21}(P_3).$$ 
Thus $P'_1\not=P_3$. We conclude that $P'_1\not\in e$, and hence $\{P'_2,P'_3\}\subseteq e$.

Note that $P'_0(2)=P_1(2)\ge P_0(2)$. So we either have $P'_0(2)>P_0(2)$ or $P'_0(2)=P_0(2)$. If $P'_0(2)>P_0(2)$, then $P'_2(2)=P'_0(2)>P_0(2)=P_2(2)\ge P_3(2)$, which implies that $P'_2\not=P_0,~P_2$ or $P_3$. Further, $P'_2\not=P'_0=P_1$, so we have $P'_2\not\in e$, a contradiction. Thus we have $P'_0(2)=P_1(2)=P_0(2)$. Then by definition of $T_1$ we must have $P_1(3)<P_0(3)$, and hence $P'_3(3)=P_0'(3)=P_1(3)<P_0(3)=P_3(3)$, which implies $P'_3\not=P_0$ or $P_3$. Moreover, recall \Cref{eq:21}, by definition
$$
F_{21}(P'_3)<F_{21}(P')=F_{21}(P_0)\le F_{21}(P_2).
$$
Thus $P'_3\not=P_2$. Further, $P'_3\not=P'_0=P_1$. We conclude that $P'_3\not\in e$, again a contradiction.

\noindent\textbf{Case 3: $P'_0=P_2$}

We will first show that $P_2'\not\in e$. 

Define the function $F_{13}:[n]^3\rightarrow \Z$ such that 
\begin{equation}\label{eq:13}
F_{13}(P)=P(1)(n+1)+P(3).   
\end{equation}
Then by definition 
$$F_{13}(P_2')>F_{13}(P_0')=F_{13}(P_2)>F_{13}(P_0).$$ 
Thus $P_2'\not=P_0$.

Recall \Cref{eq:2-1-3}. By definition, 
$$F_{2(-1)(-3)}(P'_2)<F_{2(-1)(-3)}(P_0')=F_{2(-1)(-3)}(P_2)<F_{2(-1)(-3)}(P_1).$$ 
Thus $P'_2\not=P_1$. 

Recall \Cref{eq:21}. By definition, 
$$F_{21}(P'_2)\ge F_{21}(P_0')=F_{21}(P_2)\ge F_21(P_0)>F_{21}(P_3).$$ 
Thus $P'_2\not=P_3$. Further, $P'_2\not=P'_0=P_2$. We conclude that $P'_2\not\in e$, and hence $\{P'_1,P'_3\}\subseteq e$.

Note that $P'_0(1)=P_2(1)\ge P_0(1)$. So we either have $P'_0(1)>P_0(1)$ or $P'_0(1)=P_0(1)$. If $P'_0(1)>P_0(1)$, then $P'_1(1)=P'_0(1)>P_0(1)=P_1(1)$, which implies that $P'_1\not=P_0$ or $P_1$. Moreover, recalling \Cref{eq:21}, we have $F_{21}(P_1')\ge F_{21}(P_0')=F_{21}(P_2)\ge F_{21}(P_0)>F_{21}(P_3)$, which implies $P'_1\not=P_3$. Further, $P'_1\not=P'_0=P_2$. So we have $P'_1\not\in e$, a contradiction. Thus we have $P'_0(1)=P_2(1)=P_0(1)$. Then by definition of $T_2$ we must have $P_2(3)>P_0(3)$, and hence $P'_3(3)=P'_0(3)=P_2(3)>P_0(3)=P_3(3)$, which implies $P'_3\not=P_0$ or $P_3$. Moreover, recalling \Cref{eq:21}, by definition
$$
F_{21}(P'_3)<F_{21}(P_0')=F_{21}(P_2)=F_{21}(P_0)\le F_{21}(P_1).
$$
Thus $P'_3\not=P_1$. Further, $P'_3\not=P'_0=P_2$. We conclude that $P'_3\not\in e$, again a contradiction.

\noindent\textbf{Case 4: $P'_0=P_3$.}

Recall \Cref{eq:21}, by definition,
$$
F_{21}(P'_3)<F_{21}(P_0')=F_{21}(P_3)<\min\{F_{21}(P_0), F_{21}(P_1),F_{21}(P_2)\}.
$$
Thus $P_3'\not \in e$, and hence $\{P_1',P_2'\}\subseteq e$.

Note that $P'_0(2)=P_3(2)\le P_0(2)$. So we have either $P'_0(2)<P_0(2)$ or $P'_0(2)=P_0(2)$. If $P'_0(2)<P_0(2)$, then $P'_2(2)=P_0'(2)<P_0(2)=P_2(2)\le P_1(2)$, which implies $P'_2\not=P_0,~P_1$ or $P_2$. Further, $P'_2\not=P'_0=P_3$. So we have $P'_2\not\in e$, a contradiction. Thus we have $P_0'(2)=P_3(2)=P_0(2)$. By definition of $T_3$ we must have $P_0'(1)=P_3(1)<P_0(1)$, and hence $P'_1(1)=P_0'(1)<P_0(1)=P_1(1)\le P_2(1)$, which implies $P'_1\not=P_0,~P_1$ or $P_2$. Further, $P'_1\not=P'_0=P_3$. We conclude that $P'_1\not\in e$, again a contradiction.
\end{proof}

\begin{prop}\label{prop:forbiden_4}
$H_4$ does not contain any non-trivial tight $4$-tree.
\end{prop}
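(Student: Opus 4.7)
The plan is to mirror the proof of \Cref{prop:forbiden_3} verbatim, now leaning on \Cref{prop:center_4} instead of \Cref{prop:center_3}. Concretely, I would begin by letting $T$ be a tight $4$-tree contained in $H_4$, fix an ordering $e_1,\dots,e_t$ of the edges of $T$ satisfying \Cref{eq:tight tree}, and then show by induction on $i$ that all of $e_1,\dots,e_t$ have a common center.

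For the inductive step, the tight tree condition supplies, for every $i\ge 2$, a vertex $v\in e_i$ and an index $s\le i-1$ with $e_i-v\subset e_s$. Since $|e_i-v|=3$ and $e_i\ne e_s$, this forces $|e_i\cap e_s|=3$. Now \Cref{prop:center_4} applies and tells us that $e_i$ and $e_s$ share a center. By the inductive hypothesis $e_s$ has the same center as $e_1$, so the same is true of $e_i$.

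Once every edge of $T$ shares a common center $P_0$, the vertex $P_0$ lies in $\bigcap_{i=1}^{t} e_i$, which means $T$ is trivial, contradicting the assumption that $T$ is a non-trivial tight $4$-tree.

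The only nontrivial input is \Cref{prop:center_4}, which has already been established via the lengthy case analysis, so this argument is essentially bookkeeping and I do not expect any obstacle; the main conceptual point is simply recognizing that the tight-tree ordering propagates the ``same center'' relation transitively along the edges.
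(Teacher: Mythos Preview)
Your proposal is correct and follows essentially the same approach as the paper: take a tight $4$-tree in $H_4$, use the defining ordering together with \Cref{prop:center_4} to propagate a common center through all edges, and conclude triviality. If anything, your inductive formulation via the index $s$ from \Cref{eq:tight tree} is slightly more precise than the paper's terse ``$|e_i\cap e_{i+1}|=3$'' phrasing.
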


\begin{proof}
Let $T$ be a tight $4$-tree in $H_4$. Let $e_1,\dots,e_t$ be an order on the edges of $T$ that satisfies \Cref{eq:tight tree}. Note that, $1\le i\le t-1$, $|e_i\cap e_{i+1}|=3$. Thus by \Cref{prop:center_4} all edges $e_1,\dots,e_t$ share the same center, which implies that $T$ is trivial.
\end{proof}

The 4-uniform lower bounds in \Cref{thm:main} follow immediately from \Cref{prop:forbiden_4} and \Cref{prop:independence_4}.

\section{Concluding remarks}
\begin{itemize}
    \item Observe that in our 4-uniform construction, $T_i$, when restricted to points with fixed $i^{th}$ coordinate, is actually defined by a signed lexicographical order. Thus a natural generalization to a 5-uniform construction is, for $1\le i\le 4$, we let $T_i$ be directed graphs on $[n]^4$ defined by one of the $2^33\,!=48$ signed lexicographical orders, and then define the edges in our 5-graph to be the 5-tuples $\{P_0,P_1,P_2,P_3,P_4\}$ such that $P_0P_i\in T_i$ for all $1\le i\le 4$. However, after checking with a computer program, we conclude that none of such constructions work for the 5-uniform problem. Indeed, we have checked that none of the 5-uniform constructions as described above satisfy the 5-uniform variant of \Cref{prop:center_4}.
    \item Note that our construction is in some sense not symmetric. The asymmetry here is in fact necessary. To see this, let's consider changing the definitions of $T_1$, $T_2$ and $T_3$ into the following more symmetric ones: we define the directed graphs $T_1, T_2$ and $T_3$ on $[n]^3$ as follows. For every pair of points $P,Q\in [n]^3$, 
\begin{itemize}
    \item[(1)] $PQ\in T_1$ if and only if 
$P(1)=Q(1)$
and either
\begin{equation*}
P(2)<Q(2)    
\end{equation*}
or
\begin{equation*}
P(2)=Q(2)~\text{and}~P(3)>Q(3),    
\end{equation*}

    \item[(2)] $PQ\in T_2$ if and only if 
$P(2)=Q(2)$
and either
\begin{equation*}P(3)<Q(3)    
\end{equation*}
or
\begin{equation*}
P(3)=Q(3)~\text{and}~P(1)>Q(1),    
\end{equation*}

\item[(3)] $PQ\in T_3$ if and only if 
$P(3)=Q(3)$
and either
\begin{equation*}
P(1)<Q(1)    
\end{equation*}
or
\begin{equation*}
P(1)=Q(1)~\text{and}~P(2)>Q(2).    
\end{equation*}
\end{itemize}
Let $P_0=(0,0,0)$, $P_1=(0,1,2)$, $P_2=(2,0,1)$, $P_3=(1,2,0)$, $P_0'=(1,1,1)$. Note that $P_0P_1,~P'_0P_3\in T_1$, $P_0P_2,~P'_0P_1\in T_2$ and $P_0P_3,~P'_0P_2\in T_3$. Thus both $\{P_0,P_1,P_2,P_3\}$ and $\{P_0',P_1,P_2,P_3\}$ are edges in $H_4$, but they have different centers, contradicting \Cref{prop:center_4}.
\item Note that the graph Ramsey number for tree is related to the number of edges in the tree. For tight $r$-tree with $k$ edges, in \Cref{section:Upper} we have shown that $R(n,T)=O(kn^{r-1})$. It would be interesting if we can prove a corresponding lower bound.
\begin{ques}
Let $T$ be a non-trivial tight $r$-tree with $k$ edges. Is it true that $R(T,n)=\Omega\l(kn^{r-1}\r)$?
\end{ques}

\end{itemize}

\section*{Acknowledgment}
The author would like to thank Xiaoyu He and Dhruv Mubayi for helpful discussions.

\bibliographystyle{abbrv}
\bibliography{refs}

\end{document}